\documentclass[11pt,fleqn]{article}

\setlength{\oddsidemargin}{0.0in}
\setlength{\evensidemargin}{0.0in}
\setlength{\textwidth}{6.3in}
\setlength{\topmargin}{-0.25in}
\setlength{\textheight}{8.5in}

\usepackage{amsmath}
\usepackage{booktabs}
\usepackage{amssymb}
\usepackage{amsthm}
\usepackage{textcomp}
\usepackage{graphics}
\usepackage{graphicx}
\usepackage{subfigure}
\usepackage{color}
\usepackage{enumerate}
\usepackage[round]{natbib}
\usepackage{bbm}

\newtheorem{theorem}{Theorem}
\newtheorem{prop}{Proposition}
\newtheorem{rem}{Remark}
\newtheorem{proclaim}{Assumption}
\newcommand{\sign}{\operatorname{sign}}

\def\ind{\mathbbm{1}}
\def\Real{\mathbbm{R}}
\def\E{\mathrm{E}}
\def\V{\mathrm{V}}

\begin{document}
\title{A note on the empirical process of strongly dependent stable random  variables }
\author{Emanuele Taufer}

\author{\emph{Emanuele Taufer} \\ Department of Economics and Management, University of Trento \\ \textsc{emanuele.taufer@unitn.it}
 }
\date{\today}

\maketitle

\begin{abstract} 
This paper analyzes the limit properties of the empirical process of $\alpha$-stable random variables with long range dependence. The $\alpha$-stable random variables are constructed by non-linear transformations of bivariate sequences of strongly dependent gaussian processes. The approach followed allows an analysis of the empirical process by means of  expansions in terms of  bivariate Hermite polynomials for the full range $0<\alpha<2$. A weak uniform reduction principle is provided and it is shown that the limiting process is gaussian. The results of the paper different substantailly from those available for empirical processes obtained by stable moving averages with long memory. An application to goodness-of-fit testing is discussed.

\medskip\noindent
{\bf Keywords}: Empirical process,  stable distribution, Hermite polynomial, goodness-of-fit, Kolmogorov-Smirnov.

\end{abstract}

\bibliographystyle{plainnat}
\baselineskip = 1.1\baselineskip

\section{Introduction}

Consider a sequence of random variables (rv) $X_1, \dots, X_n$, with common continuous cumulative distribution function (CDF) $F$, constituting a sample from a strictly stationary and ergodic time series $\{ X_i, i \in \mathbb{Z} \}$ where $\mathbb{Z} = \{0, \pm 1, \pm 2, \dots \}$. For $\ind \{A\}$ being the indicator function of the event $A$, let $F_n$ denote the empirical distribution function (EDF) of the sequence, i.e. $F_n(x)= \frac{1}{n} \sum_{i=1}^n \ind \{X_i\leq x\}$. It is well known that the empirical process (EP)
\begin{equation}\label{EP}
\sqrt{n}(F_n(x)-F(x))
\end{equation}
converges to a non-degenerate Gaussian process either in the case where $\{X_i\}$ is a sequence of $i.i.d.$ or weakly dependent rv. 

The behavior of the EP is quite different in the case of long range dependence  (LRD) where proper normalizing constants are of order $n^{D/2}$, $0<D<1$ and the weak limit, if it exists, is a degenerate process in $x$. 

This paper studies the weak limit of $(F_n(x)-F(x))$, properly normalized, when the sample is formed by a sequence of strongly dependent stable random variables with index of stability $0<\alpha<2$.

One of the mainstream approaches in the study of LRD processes is via expansions, by means of orthogonal polynomials, of non-linear functionals of Gaussian LRD processes. In the case discussed here, if $F$ denotes the CDF of a stable rv $X$ and $\Phi$ the CDF of a standard normal rv $Z$, one has $\ind\{X\leq x\} = \ind\{F^{-1} \circ \Phi(Z)\leq x\}= \ind \{Z\leq F \circ \Phi^{-1} (x)\}$; in this framework it is quite simple to provide an expansion of the indicator function in an appropriate $L_2$ space. Howeve, given that analytic expressions of $F^{-1}$, with a few exceptions, are not available, this approach may not be optimal if one, for simulation, validation and testing purposes, wishes to generate stable rv given a sequence of LRD gaussian rv.

In this paper an approach based on a bivariate expansion is proposed. This will allow to provide fast and reliable methods of stable rv generation starting form and LRD gaussian sequence and, at the same time provide an analytic framework for the analysis of the EP. Some key results in this respect are due to \citet{CMS76} and \citet{Weron96} as far as  stable rv are concerned. Specific papers considering the EP of non-linear transformation of LRD gaussian sequences discussing techniques relevant here  are those of \citet{DehTaq89}, \citet{CsoMie96} and \citet{LeoSak01}. We also refer the interested reader to the excellent reviews of \citet{DehPhi02} for a general discussion on EP techniques and \citet{KouSur02} for a specific analysis of the LRD case. Other relevant literature discussing  bivariate (and multivariate) expansion on non-linear functionals of LRD gaussian sequences and other bivariate expansions are \citet{Arc94}, \citet{LeoTau01}, \citet{LeoSakTau02}, \citet{LedTaq11}, \citet{LeoTau13}, \citet{LedTaq14}. 

Another mainstream approach in the study of LRD processes, which will not be discussed here,  is based on linear processes (or moving averages). In this line of study, specific papers devoted to the EP  are those of \citet{HoHsi96}, \citet{GirSur99} and \citet{KouSur01} which, in particular, consider the case of stable innovations with $1 < \alpha <2$ and where a non-gaussian weak limit is obtained. 

It is worth noting that the approach followed here provides a discussion of the full range $0<\alpha<2$, new to the literature, and provides a gaussian weak limit. These results show the essential different nature, when outside the gaussian case, of LRD moving average processes and LRD processes obtained by non-linear transformation of gaussian sequences. 



The results obtained can find applications in the analysis of statistical functionals based on the EP. Relevant and recent examples in the literature concern the analysis of goodness of fit tests, such as, e.g. \citet{JamTau06}, \citet{Tau09}, \citet{DehTaq13}, \citet{KouSur13}, \citet{Gho13}.

This paper is organized as follows:  Section \ref{Sec:Background} contains background arguments while in Section \ref{Sec:HR} the EP of stable rv is discussed. A final section presents applications and simulations to substantiate the theoretical findings.

\section{Background}\label{Sec:Background}

In this section, some needed key features of stable rv will be recalled and a bivariate expansion, in terms of Hermite polynomials, of the EP of LRD stable random variables will be provided.

In order to define exactly the sequence $X$ of stable rv we state the following assumption where the classical set-up for a sequence of LRD gaussian random variables is defined:

\begin{proclaim}\label{A:1} Let $Z_i^{(1)}$ and $Z_i^{(2)}$ be independent copies of a sequence of gaussian random variables with null mean and unit variance and, for $j=1,2$,  $r(k)=\E(Z_i^{(j)},Z_{i+k}^{(j)}) = L(k) k^{-D}$ with $L(k)$ a slowly varying function and $0<D<1$.
\end{proclaim}

\subsection{Stable rv}\label{sub:Stablerv}

For $0 < \alpha \leq 2$, write $X \sim S_\alpha(\beta, \sigma, \mu)$ to denote an $\alpha$-stable rv with asymmetry $\beta \in[-1,1]$, scale $\sigma>0$ and location $\mu\in \mathbb{R}$, with characteristic function $\psi$ given by (here $i=\sqrt{-1}$)
\begin{equation}\label{Srep1}
\log \psi(z)= \begin{cases}
i \mu z -\sigma^\alpha |z|^\alpha [ 1- {i} \beta \sign(z) \tan (\frac{\pi \alpha}{2})], \quad \alpha \neq 1 \\
i \mu z -\sigma |z| [ 1+ {i} \beta \sign(z)\frac{ 2}{\pi} \log (|z|)],  \quad \alpha = 1.
\end{cases}
\end{equation}
An alternative representation, justified by considerations of analytic nature (see \citet{Zol86}, Theorem C.3), which will be relevant for our development is
\begin{equation}\label{Srep2}
\log\psi(z)= \begin{cases}
i \mu z -\sigma_2^\alpha |z|^\alpha \exp\{ - i \beta_2 \sign(z) \frac{\pi}{2} K(\alpha)\}, \quad \alpha \neq 1 \\
i \mu z -\sigma_2 |z| [ \frac{\pi}{2}+ i \beta_2 \sign(z) \log (|z|)],  \quad \alpha = 1
\end{cases}
\end{equation}
where $K(\alpha)= \alpha-1+ \sign(1-\alpha)$. The parameters of representations \eqref{Srep1} and \eqref{Srep2} can be connected: for $\alpha = 1$,  it holds that $\beta_2=\beta$ and $\sigma_2=2 \sigma/\pi$;  while for $\alpha \neq 1$ one has $\sigma$ and $\sigma_2$, $\beta$ and $\beta_2$  related by the equations
\begin{equation}
\tan\left(  \frac{\beta_2 \pi K(\alpha)}{2} \right) = \beta \tan\left(  \frac{ \pi \alpha}{2} \right), \quad \sigma_2 = \sigma \left(1+ \beta^2 \tan^2\left(  \frac{ \pi \alpha}{2} \right)\right)^{1/(2 \alpha)}.
\end{equation}

\citet{CMS76} introduced a fast algorithm for generating $\alpha$-stable rv; later \citet{Weron96} provided proof details about the algorithm; using when possible, for continuity, the notation established in \citet{Weron96}, define 
\begin{equation}\label{gammaW}
\gamma=\gamma(Z^{(1)}) = \pi \Phi(Z^{(1)}) - \pi/2\qquad \text{and} \quad W=W(Z^{(2)})= - \log\left(1-\Phi(Z^{(2)})\right).
\end{equation}
and let 
\begin{equation}\label{gamma0}
\gamma_0 = - \beta_2 \frac{\pi K(\alpha)}{ 2 \alpha}.
\end{equation}

Note that $\gamma \sim U\left(-\frac{\pi}{2},\frac{\pi}{2}\right)$,  a uniform r.v. in the interval $\left(-\frac{\pi}{2},\frac{\pi}{2}\right)$ and $W \sim E(1)$, an exponential rv with mean 1. 

For $\alpha \neq 1$ let $X=G_0(Z^{(1)},Z^{(2)})$ where
\begin{equation}\label{G}
G_0(z_1,z_2)= \frac{\sin(\alpha(\gamma(z_1)-\gamma_0))}{\left(\cos\gamma(z_1)\right)^{1/\alpha}}\left(\frac{\cos(\gamma(z_1)-\alpha (\gamma(z_1) -\gamma_0))}{W(z_2)}\right)^{(1-\alpha)/\alpha}; 
\end{equation}

for $\alpha=1$ let $X=G_1(Z^{(1)},Z^{(2)})$ where
\begin{equation}\label{G1}
G_1(z_1,z_2)= \left( \frac{\pi}{2}+\beta_2 \gamma(z_1) \right) \tan(\gamma(z_1)) - \beta_2 \log \left( \frac{W(z_2) \cos (\gamma(z_1))}{\pi/2+ \beta_2 \gamma(z_1)} \right).
\end{equation}

From \citet{CMS76}, \citet{Weron96} we have the following proposition:

\begin{prop}\label{prop:1} 
Let $\gamma$, $W$ and $\gamma_0$ be defined respectively as in \eqref{gammaW} and \eqref{gamma0}; let $G_0(\cdot)$ and $G_1(\cdot)$ be defined respectively as in \eqref{G} and \eqref{G1}. Then: for $\alpha \neq 1$, $X =G_0(Z^{(1)},Z^{(2)})$ is $S_\alpha(\beta_2,1,0)$ in the representation \eqref{Srep2}; for $\alpha=1$, $X=G_1(Z^{(1)},Z^{(2)})$ is $S_1(\beta_2,1,0)$ in the representation \eqref{Srep2}.
\end{prop}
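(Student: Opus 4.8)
The plan is to reduce the claim to a statement about a uniform variable and an independent exponential variable, and then to match the resulting distribution to the stable law through the integral (Zolotarev) representation that already underlies \eqref{Srep2}.

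First I would remove the Gaussian layer. Because $Z^{(1)}$ and $Z^{(2)}$ are independent with the continuous CDF $\Phi$, the probability integral transform makes $\Phi(Z^{(1)})$ and $\Phi(Z^{(2)})$ independent and $U(0,1)$. Hence $\gamma=\pi\Phi(Z^{(1)})-\pi/2$ is $U(-\pi/2,\pi/2)$ and $W=-\log(1-\Phi(Z^{(2)}))$ is $\mathrm{Exp}(1)$, independent of $\gamma$, as noted after \eqref{gamma0}. It therefore suffices to prove that the variable assembled from such a $\gamma$ and $W$ through \eqref{G} (resp. \eqref{G1}) is $S_\alpha(\beta_2,1,0)$ in representation \eqref{Srep2}; this is exactly the assertion of \citet{CMS76} and \citet{Weron96}.

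For $\alpha\neq1$ I would work with the density. Conditioning on $\gamma$, equation \eqref{G} reads $X=A(\gamma)\,W^{-(1-\alpha)/\alpha}$ with
\begin{equation*}
A(\gamma)=\frac{\sin(\alpha(\gamma-\gamma_0))}{(\cos\gamma)^{1/\alpha}}\bigl(\cos(\gamma-\alpha(\gamma-\gamma_0))\bigr)^{(1-\alpha)/\alpha},
\end{equation*}
a power map that is monotone in $W$, so a one-line change of variables turns the $\mathrm{Exp}(1)$ density of $W$ into a conditional density of $X$ of the exact form $c(\gamma)\,|x|^{\alpha/(\alpha-1)-1}\exp\{-g(\gamma)\,|x|^{\alpha/(\alpha-1)}\}$, with $c(\gamma)$ and $g(\gamma)$ explicit trigonometric functions built from $A(\gamma)$. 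Integrating over $\gamma\sim U(-\pi/2,\pi/2)$ then reproduces, term by term, Zolotarev's integral representation of the $S_\alpha(\beta_2,1,0)$ density (\citet{Zol86}, whose parametrization is tied to \eqref{Srep2}): one must identify the kernel $g(\gamma)$ with Zolotarev's angular kernel under the change of variable, and check that the prefactors and the limits of integration agree. The constant $\gamma_0$ of \eqref{gamma0} is exactly what shifts the effective range of integration so that the skewness of the limit comes out as $\beta_2$; the sign of $A(\gamma)$ splits the $\gamma$-range into the parts feeding the density on $\{x>0\}$ and on $\{x<0\}$.

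The case $\alpha=1$ is treated in the same spirit: in \eqref{G1} the variable is $X=B(\gamma)-\beta_2\log\bigl(W\cos\gamma/(\pi/2+\beta_2\gamma)\bigr)$, and the additive $\log W$ term is what produces the $|z|\log|z|$ exponent of the $\alpha=1$ line of \eqref{Srep2}. The main obstacle is the identification in the previous paragraph: verifying that the explicit trigonometric coefficient of \eqref{G} coincides with Zolotarev's kernel after the change of variable, together with the separate bookkeeping forced by the sign of the exponent $(1-\alpha)/\alpha$ (which flips at $\alpha=1$), by the sign of $x$, and by the distinct $\alpha=1$ representation. Equivalently, one may compute $\E[e^{izX}]$ directly by integrating out $W$ (a contour/gamma integral) and then $\gamma$, where the same trigonometric identity is the crux. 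Since \citet{CMS76} and \citet{Weron96} carry out these computations, the proposition follows once the reduction of the first step is in place.
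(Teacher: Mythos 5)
Your proposal is correct and matches the paper's treatment: the paper does not prove Proposition \ref{prop:1} at all, but simply observes (after \eqref{gamma0}) that $\gamma \sim U(-\pi/2,\pi/2)$ and $W \sim E(1)$ are independent---your probability-integral-transform reduction---and then cites \citet{CMS76} and \citet{Weron96} for the distributional identity, exactly as you do. Your additional sketch of how those references verify the claim (conditional density of $X$ given $\gamma$, identification with Zolotarev's integral representation, or the characteristic-function route) is a faithful outline of the cited arguments rather than a different proof.
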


Proposition \ref{prop:1} suffices for generating $S_\alpha(\beta,\sigma,\mu)$ rv as the class is invariant under affine transformations of the type $X \mapsto a X + b$, $a, b \in \Real$. More specifically, if $X \sim S_\alpha(\beta,1,0)$, then $Y \sim S_\alpha(\beta,\sigma,\mu)$ for
\begin{equation}
Y = \begin{cases}
\sigma X + \mu, \qquad \qquad \qquad \, \,   \alpha  \neq 1 \\
\sigma X + \frac{2}{\pi} \beta \sigma \log \sigma + \mu, \quad \alpha = 1 .
\end{cases}
\end{equation}
Finally we recall that, if $F(x,\alpha,\beta_2)$ represents the CDF of a $S_\alpha(\beta_2,1,0)$ r.v., for any admissible parameters $\alpha$ and $\beta_2$ (or $\beta$), the following equality holds
\begin{equation}\label{symm}
F(x,\alpha, \beta_2)=1 - F(-x, \alpha, -\beta_2), \quad x \in \Real.
\end{equation}

\subsection{Hermite polynomials expansion of the EP}

From the discussion in \ref{sub:Stablerv} it follows that we can represent the EDF of a stable rv as
\begin{equation}
F_n(x)= \frac{1}{n} \sum_{i=1}^n \ind{\{X_i\leq x \} }=\frac{1}{n} \sum_{i=1}^n \ind\{G_k(Z_i^{(1)},Z_i^{(2)})\leq x \}, \quad k=0,1,
\end{equation}
where $k=1$ if $\alpha =1$ and $k=0$ in all other cases $0 < \alpha <2$. We are not explicitly interested in the gaussian case as it can be solved directly in a much simpler way; indeed the transformation \eqref{G} reduces to the well known Box-Muller transformation for $\alpha=2$ and $\beta_2=0$.

Since the function $\ind\{(G_k(Z_i^{(1)},Z_i^{(2)})\leq x )\}$ $k=0,1$ is square integrable with respect to the standard gaussian density, we are going to provide an expansion of \eqref{EP} in terms of orthogonal Hermite polynomials.

Let $\phi(u)$, $u \in \Real$ denote the standard gaussian density and ${\cal L}_2={\cal L}_2(\Real^2, \phi(u) \phi(v) \, du \, dv)$ be the Hilbert space of real measurable functions $H(u,v)$ such that
\begin{equation}
\E \left[ H^2(u,v)\right] = \int_{\Real^2} H^2(u,v) \phi(u) \phi(v) \, du \, dv < \infty
\end{equation}
and let $H_m$ denote the standard Hermite polynomials, i.e.
$H_m(u) = (-1)^m \phi^{-1}(u) \frac{d^m}{d u^m} \phi(u)$.
Since the system $\{H_{m_1}(u)H_{m_2}(v)\}_{m_1 \geq 0, m_2 \geq 0}$ is a complete orthogonal system for ${\cal L}_2$, for every $x$ there exists an  expansion 
\begin{equation}\label{HPE}
\ind\{X_i\leq x \} =\ind \{G_k(Z_i^{(1)},Z_i^{(2)})\leq x \} = \sum_{m \geq 0} \,\, \sum_{m_1+ m_2 \geq m}  \frac{J_{m_1,m_2}^k(x)}{m_1! m_2!} H_{m_1}(Z_i^{(1)})H_{m_2}(Z_i^{(2)}), \quad k=0,1,
\end{equation} 
converging in ${\cal L}_2$ with coefficients
\begin{equation}\label{Jcoeff}
J_{m_1,m_2}^k(x) = \E^{Z^{(1)},Z^{(2)}}\left[\ind \{G_k(Z^{(1)},Z^{(2)})\leq x \} H_{m_1}(Z^{(1)})H_{m_2}(Z^{(2)})\right], \quad k=0,1.
\end{equation}
When not explicitly necessary, we will suppress dependence of the $J$'s coefficients and other quantities on $k$ and refer generally to an $S_\alpha(\beta_2,1,0)$ r.v., $0< \alpha < 2$ obtained via the transformation $G_1$ if $\alpha =1$ and $G_0$ otherwise.

Note that by a change of variable technique, from Proposition \ref{prop:1}, $J_{0,0}^k(x) = F(x)$ where $F$ indicates the CDF of a $S_\alpha(\beta_2,1,0)$ r.v., $0< \alpha < 2$. It follows that we have the ${\cal L}_2$ expansion
\begin{equation}
F_n(x) - F(x) = \sum_{q \geq m} \sum_{m_1 + m_2 =q} \frac{J_{m_1,m_2}(x)}{m_1! m_2!} \frac{1}{n} \sum_{i=1}^n H_{m_1}(Z_i^{(1)})H_{m_2}(Z_i^{(2)}).
\end{equation}

Define here $m=m(x)$ as the Hermite rank of the function $\ind\{G_1(u,v)\leq x\}$ (similarly for $G_0$), that is $m=m(x)=\min \{m_1+m_2=m: J_{m_1,m_2}(x) \neq 0 \}$. By the well known property of Hermite polynomials, with $\delta_{m}^{n}$ indicating Kronecker's delta, $\E H_{m}(Z_0)  H_{n}(Z_k)=\delta_{m}^{n} m!r^m(k)$, from which,
\begin{equation}
\V(F_n(x)) = \sum_{q \geq m} \sum_{m_1 + m_2 =q} \frac{[J_{m_1,m_2}(x)]^2}{m_1! m_2!} \, \sigma^2_{n,q}
\end{equation}
with 
\begin{equation}
\sigma^2_{n,q} = \frac{1}{n^2} \sum_{i=1}^n\sum_{j=1}^n \E \left[H_{m_1}(Z_i^{(1)})H_{m_1}(Z_j^{(1)})\right] \E \left[H_{m_2}(Z_i^{(2)})H_{m_2}(Z_j^{(2)})\right] = \frac{1}{n^2} \sum_{i=1}^n\sum_{j=1}^n r^{q}(|i-j|).
\end{equation}
For $0< D  < 1/m$ we obtain, as $N\rightarrow \infty$, that $\sigma^2_{n,m} \sim c(m,D) L^m(n) n^{-mD}$ with the constant $c(m, D)= 2[(1-mD)(2-mD)]^{-1}$, i.e. , if the rank of the expansion \eqref{HPE} is $m$, and  $0< D  < 1/m$ then the EP exhibits LRD. 

$F_n(x)$ can then be expressed as a bivariate expansion in Hermite polynomials. A uniform reduction principle as well as weak convergence results for this case are discussed by \citet{LeoSak01} and \citet{LeoSakTau02}, based on the results of \citet{Taqqu75}, \citet{Taqqu79}, \citet{DobMaj79} and \citet{DehTaq89} using a construction of multiple Wiener It\^o integrals with dependent integrators as proposed in \citet{FoxTaq87}.  These previous result are summarized in the following proposition:

\begin{prop} \label{prop:2} Let Assumption \ref{A:1} hold and the functions $\ind\{G_0(u,v)\leq x\}$ and $\ind\{G_1(u,v)\leq x\}$ have Hermite rank $m \geq 1$ and $0<D<1/m$. Let $d_{n,m}^2=c(m,D) \, n^{-mD}L^m(n)$ and define, for $t\in[0,1]$, 
$$ 
D([nt],x)=d^{-1}_{n,m}[nt]\left(F_{[nt]}(x) - F(x)\right)
$$
Then,
\begin{itemize}
\item[a)] 
D(n,x) converges, as $n \rightarrow \infty$, in ${\cal L}_2$ to 
\begin{equation}
d^{-1}_{n,m} \left(\sum_{m_1 + m_2 =m} \frac{J_{m_1,m_2}(x)}{m_1! m_2!} \frac{1}{n} \sum_{i=1}^n H_{m_1}(Z_i^{(1)})H_{m_2}(Z_i^{(2)})\right)
\end{equation}
\item[b)]
$\{D([nt],x); \, -\infty \leq x \leq \infty; \, 0\leq t\leq1\}$ converges, as $n \rightarrow \infty$ to the process
\begin{equation}
\left\{\sum_{m_1 + m_2 =m} \frac{J_{m_1,m_2}(x)}{m_1! m_2!} \, Z_{m_1,m_2}(t);\, -\infty \leq x \leq \infty; \, 0\leq t\leq1 \right\}
\end{equation}
in the sense of weak convergence in the space  $D[-\infty, \infty] \times [0,1]$, equipped with sup-norm.
\end{itemize}
\end{prop}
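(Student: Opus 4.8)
The plan is to handle the two parts by separate machinery: part (a) is a pure second-moment ($\mathcal{L}_2$) estimate that isolates the rank-$m$ chaos component and shows that every higher-order component is negligible once we divide by $d_{n,m}$; part (b) combines a \emph{uniform} version of that reduction with the weak-convergence theory for bivariate Hermite expansions driven by dependent Gaussian integrators, after checking that the hypotheses of the cited results hold for the functions $\ind\{G_k(u,v)\le x\}$.

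For part (a) I would start from the $\mathcal{L}_2$ expansion of $F_n(x)-F(x)$ obtained above and isolate the tail,
\[
\begin{aligned}
&D(n,x)-\sum_{m_1+m_2=m}\frac{J_{m_1,m_2}(x)}{m_1!\,m_2!}\,\frac{d_{n,m}^{-1}}{n}\sum_{i=1}^n H_{m_1}(Z_i^{(1)})H_{m_2}(Z_i^{(2)})\\
&\qquad=\frac{d_{n,m}^{-1}}{n}\sum_{q>m}\sum_{m_1+m_2=q}\frac{J_{m_1,m_2}(x)}{m_1!\,m_2!}\sum_{i=1}^n H_{m_1}(Z_i^{(1)})H_{m_2}(Z_i^{(2)}).
\end{aligned}
\]
Since $Z^{(1)}$ and $Z^{(2)}$ are independent the cross-covariances factor, $\E\big[H_{m_1}(Z_i^{(1)})H_{m_2}(Z_i^{(2)})H_{m_1'}(Z_j^{(1)})H_{m_2'}(Z_j^{(2)})\big]=\delta_{m_1}^{m_1'}\delta_{m_2}^{m_2'}\,m_1!\,m_2!\,r^{m_1+m_2}(|i-j|)$, so the chaos components are mutually orthogonal and the variance of the tail equals $d_{n,m}^{-2}\sum_{q>m}\big(\sum_{m_1+m_2=q}\tfrac{[J_{m_1,m_2}(x)]^2}{m_1!\,m_2!}\big)\sigma_{n,q}^2$. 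I would then use two facts: the Parseval identity $\sum_{q\ge0}\sum_{m_1+m_2=q}\tfrac{[J_{m_1,m_2}(x)]^2}{m_1!\,m_2!}=F(x)<\infty$, and the uniform domination $|\sigma_{n,q}^2|\le n^{-2}\sum_{i,j}|r(|i-j|)|^{m+1}$ valid for every $q\ge m+1$ (because $|r|\le1$). Together these bound the tail variance by a constant multiple of the absolute-value analogue of $\sigma_{n,m+1}^2/d_{n,m}^2$, and the stated asymptotics — $\sigma_{n,q}^2\sim c(q,D)L^q(n)n^{-qD}$ for $qD<1$, and $O(n^{-1})$ or $O(n^{-1}\log n)$ for $qD\ge1$ — give $\sigma_{n,m+1}^2/d_{n,m}^2\to0$ precisely because $mD<1$. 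This proves (a).

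For part (b) I would establish convergence of the finite-dimensional distributions and tightness separately. First I would promote (a) to hold uniformly in $t\in[0,1]$ and $x\in[-\infty,\infty]$, so that $D([nt],x)$ is asymptotically equivalent, in sup-norm, to its leading rank-$m$ term. For that term the non-central limit theorem applies: the normalized partial sums $d_{n,m}^{-1}n^{-1}\sum_{i=1}^{[nt]}H_{m_1}(Z_i^{(1)})H_{m_2}(Z_i^{(2)})$ converge to the multiple Wiener--It\^o integral $Z_{m_1,m_2}(t)$, in the dependent-integrator form constructed by \citet{FoxTaq87} and used by \citet{LeoSak01} and \citet{LeoSakTau02}. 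Joint convergence of the whole family $\{(m_1,m_2):m_1+m_2=m\}$ to the vector $\{Z_{m_1,m_2}(t)\}$ follows by the method of moments (diagram formulae), again using the independence of the two sequences, and the linear combination with weights $J_{m_1,m_2}(x)/(m_1!\,m_2!)$ yields the stated limit. Tightness in $D[-\infty,\infty]\times[0,1]$ under sup-norm I would obtain by exploiting monotonicity in both arguments — $x\mapsto\ind\{G_k\le x\}$ is monotone and $[nt]\mapsto\sum_{i\le[nt]}$ is monotone — which reduces the modulus-of-continuity control to increment-moment bounds of the type established by \citet{DehTaq89}.

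The main obstacle is the uniform-in-$x$ reduction principle. Pointwise, (a) is a one-line variance estimate, but upgrading it to hold simultaneously over the entire real line requires moment bounds on the increments $D([nt],x)-D([nt],y)$ of the tail process together with a chaining argument that uses the monotonicity of $x\mapsto\ind\{G_k(u,v)\le x\}$ to control the oscillation. This step, along with the preliminary verification that the bivariate indicator belongs to $\mathcal{L}_2$ and possesses a well-defined Hermite rank $m$, is where the genuine work lies; the remaining finite-dimensional convergence and tightness then follow as direct applications of the \citet{DobMaj79}, \citet{DehTaq89}, \citet{LeoSak01} and \citet{FoxTaq87} results once these hypotheses are in place.
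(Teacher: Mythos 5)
The paper offers no proof of Proposition \ref{prop:2} at all: it is explicitly presented as a summary of results taken from \citet{LeoSak01} and \citet{LeoSakTau02}, which rest on \citet{Taqqu75}, \citet{Taqqu79}, \citet{DobMaj79}, \citet{DehTaq89} and the dependent-integrator construction of \citet{FoxTaq87}, and your sketch reconstructs precisely that chain of arguments --- your part (a) (orthogonality of the chaos components, Parseval bound $\sum_{q\ge 0}\sum_{m_1+m_2=q}[J_{m_1,m_2}(x)]^2/(m_1!\,m_2!)=F(x)$, and domination of $\sigma^2_{n,q}$ by the absolute-value analogue of $\sigma^2_{n,m+1}$ for $q>m$) is a complete and correct pointwise reduction, while your part (b) correctly identifies the uniform reduction principle as the genuinely hard step and defers it, together with the non-central limit theory for the leading term, to the very same references the paper cites. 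Your proposal is therefore consistent with, and in fact more detailed than, the paper's own treatment of this statement.
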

The processes $Z_{m_1,m_2}(t)$, $m_1,m_2 \geq 0$, $m_1+m_2=m$ are given as multiple Wiener-It\^o integrals of the form
\begin{equation}
Z_{m_1,m_2}(t) = K(m,D) \int_{\Real^m}^{'} \frac{e^{it(\lambda_1 + \dots + \lambda_m)}}{i(\lambda_1 + \dots + \lambda_m)} \prod_{j=1}^m |\lambda_j|^{(D-1)/2} \, \prod_{j=1}^{m_1} W_1(d \lambda_j) \, \prod_{j=m_1+1}^{m_2} W_2(d \lambda_j)
\end{equation}
where $W_1$ and $W_2$ are independent copies of a complex valued gaussian white noise on $\Real$ and 
\begin{equation}
K(m,D)= \frac{\frac{1}{2}(1-mD)(2-mD)}{\sqrt{m! \Gamma(D) \sin[(1-D)\pi/2]}}.
\end{equation}
The symbol of integration $\int_{\Real^m}^{'}$ stands to indicate that the hyper diagonals $\{ \lambda_j=\lambda_k, j \neq k \}$ are excluded form the domain of integration. Note that $Z_{m_1,m_2}(1)$ is gaussian for $m_1+m_2=1$ and that the normalizing factor $K(m,D)$ ensures unit variance of $Z_{m_1,m_2}(1)$.

\begin{rem}
As discussed in the introduction, one could consider the simpler non-linear transformation, for $Z$ satisfying Assumption \ref{A:1}, $X = G(Z)$ for $G=F^{-1} \circ \Phi$, in which case an $L_2$ expansion in terms of Hermite polinomials would result in 
\begin{equation}
F_n(x) - F(x) = \sum_{q \geq m}  \frac{J_{q}(x)}{q!} \frac{1}{n} \sum_{i=1}^n H_{q}(Z_i)
\end{equation}
with $G$ having Hermite rank $m=1$ since $H_1(Z)=\E [ \ind\{Z\leq F\circ \Phi^{-1}(x)\} Z] = - \phi(F\circ \Phi^{-1}(x))$. Although this approach would be much simpler for asymptotic analysis, the bivariate case will be considered in detail here for the reasons discussed in the introduction.
\end{rem}

\section{Hermite rank of the stable-EP}\label{Sec:HR}

For  $(Z^{(1)},Z^{(2)})=(Z_1,Z_2)$ satisfying Assumption 1 (indeed only normality and independence are exploited) the main result of this section is the proof that the functions $\ind\{G_0(Z_1,Z_2)\leq x\}$ and $\ind\{G_1(Z_1,Z_2)\leq x\}$ have Hermite rank $m=1$ $\forall x$ and consequently the asymptotic distribution of \eqref{EP}, properly normalized, is gaussian. Explicit formulae for the coefficients are presented. As there are several cases, the result is presented in three separate theorems which discuss respectively the cases $0<\alpha<1$, $\alpha=1$, $1 < \alpha <2$. 

Since symmetry relations \eqref{symm} will be exploited in deriving the coefficients $J_{m_1,m_2}(x)$, their dependence on $\beta_2$  will be explicitly outlined by writing $J_{m_1,m_2}(x,\beta_2)$. 

Also, let
\begin{equation}\label{agamma}
a(\gamma)= \left( \frac{\sin \alpha(\gamma-\gamma_0)}{\cos \gamma} \right)^{\frac{\alpha}{(1-\alpha)}} \, \frac{\cos (\gamma-\alpha(\gamma-\gamma_0))}{\cos \gamma},
\end{equation}

\begin{equation}\label{a1gamma}
a_1(\gamma)=  \frac{\frac{\pi}{2} +\beta_2 \gamma}{\cos \gamma}  \, \exp \left\{ \frac{1}{\beta_2} \left(\frac{\pi}{2}+ \beta_2 \gamma \right) \tan \gamma \right\}.
\end{equation}

\begin{theorem}\label{th:1}
Let $0<\alpha<1$; the function $\ind\{G_0(Z_1,Z_2)\leq x\}$  has Hermite rank $m=m(x)=1$ $\forall x \in (-\infty,\infty)$ with coefficients:
\begin{itemize}
\item[a)] for $x >0$,
\begin{equation}\label{T1a1}
J_{1,0}(x, \beta_2) = \frac{1}{\pi} \int_{\gamma_0}^{\pi/2} e^{-x^{\frac{\alpha}{\alpha-1}} a(\gamma)} \Phi^{-1}\left(\frac{1}{\pi}(\gamma+ \frac{\pi}{2})\right) \, d \, \gamma - (\phi \circ \Phi^{-1})\left(\frac{1}{\pi}(\gamma_0+ \frac{\pi}{2})\right),
\end{equation}
\begin{equation}\label{T1a2}
J_{0,1}(x, \beta_2) = \frac{1}{\pi} \int_{\gamma_0}^{\pi/2} ( \phi \circ \Phi^{-1})(1-e^{-x^{\frac{\alpha}{\alpha-1}} a(\gamma)} )  \, d \, \gamma;
\end{equation}
\item[b)] for $x =0$,
\begin{equation}
J_{1,0}(0, \beta_2) =  - (\phi \circ \Phi^{-1})\left(\frac{1}{\pi}(\gamma_0+ \frac{\pi}{2})\right),
\end{equation}
\begin{equation}
J_{0,1}(0, \beta_2) = 0;
\end{equation}
\item[c)] for $x <0$, formulae can be derived from the case $x>0$: for $J_{1,0}(x, \beta_2)$, using formula \eqref{T1a1}, compute $J_{1,0}(-x, -\beta_2)$ while for  $J_{0,1}(x, \beta_2)$, using formula \eqref{T1a2}, compute $-J_{0,1}(-x, -\beta_2)$.
\end{itemize}
\end{theorem}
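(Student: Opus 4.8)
The plan is to compute the two first-order coefficients $J_{1,0}(x,\beta_2)=\E[\ind\{G_0\leq x\}Z_1]$ and $J_{0,1}(x,\beta_2)=\E[\ind\{G_0\leq x\}Z_2]$ explicitly and then observe that at least one of them is nonzero for every $x$, which forces the rank to equal $1$ (it is automatically $\geq 1$ after centering, since $J_{0,0}=F$). The natural device is to pass from $(Z_1,Z_2)$ to the pair $(\gamma,W)$ of \eqref{gammaW}: because $Z_1,Z_2$ are independent, $\gamma\sim U(-\tfrac{\pi}{2},\tfrac{\pi}{2})$ with $Z_1=\Phi^{-1}(\tfrac1\pi(\gamma+\tfrac\pi2))$, and $W\sim E(1)$ with $Z_2=\Phi^{-1}(1-e^{-W})$, so that every expectation becomes $\tfrac1\pi\int_{-\pi/2}^{\pi/2}\int_0^\infty(\cdot)\,e^{-W}\,dW\,d\gamma$. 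In these coordinates $G_0$ depends on $Z_1$ through $\gamma$ as in \eqref{G} and on $Z_2$ only through $W$.

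First I would settle the geometry of the event $\{G_0\leq x\}$. Since $\cos\gamma>0$ and, for the admissible parameter range of the CMS construction, $\cos(\gamma-\alpha(\gamma-\gamma_0))>0$ (so that the fractional power in \eqref{G} is legitimate, cf.\ Proposition \ref{prop:1}), the sign of $G_0$ is that of $\sin\alpha(\gamma-\gamma_0)$, i.e.\ $G_0>0$ for $\gamma>\gamma_0$ and $G_0<0$ for $\gamma<\gamma_0$. Moreover $G_0$ is strictly decreasing in $W$ on $\{\gamma>\gamma_0\}$ because $(1-\alpha)/\alpha>0$. Hence for $x>0$ the strip $\{\gamma<\gamma_0\}$ lies entirely in $\{G_0\leq x\}$, while on $\{\gamma>\gamma_0\}$ solving $G_0\leq x$ for $W$ gives $W\geq W_0(\gamma):=x^{\alpha/(\alpha-1)}a(\gamma)$; checking that the algebraic rearrangement collapses exactly to the function $a(\gamma)$ of \eqref{agamma} is a routine but essential identity (it hinges on $\tfrac{\alpha}{1-\alpha}+1=\tfrac1{1-\alpha}$).

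With the region identified, I would integrate out $W$ first. For $J_{1,0}$ the inner integral is $\int_0^\infty e^{-W}dW=1$ on $\{\gamma<\gamma_0\}$ and $\int_{W_0}^\infty e^{-W}dW=e^{-W_0(\gamma)}$ on $\{\gamma>\gamma_0\}$; the former contribution reduces, after the substitution $v=\tfrac1\pi(\gamma+\tfrac\pi2)$ together with the identity $\tfrac{d}{du}[-\phi(\Phi^{-1}(u))]=\Phi^{-1}(u)$ and $\phi(\Phi^{-1}(0))=0$, to the boundary term $-(\phi\circ\Phi^{-1})(\tfrac1\pi(\gamma_0+\tfrac\pi2))$, which yields \eqref{T1a1}. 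For $J_{0,1}$ the inner integral $\int\Phi^{-1}(1-e^{-W})e^{-W}dW$ becomes $\int\Phi^{-1}(u)\,du=-\phi(\Phi^{-1}(u))$; on $\{\gamma<\gamma_0\}$ it runs over $u\in(0,1)$ and \emph{vanishes} because $\int_0^1\Phi^{-1}=\E Z_2=0$, while on $\{\gamma>\gamma_0\}$ it equals $(\phi\circ\Phi^{-1})(1-e^{-W_0(\gamma)})$ using $\phi(\Phi^{-1}(1))=0$, giving \eqref{T1a2}. The case $x=0$ is then immediate, since $\{G_0\leq0\}=\{\gamma\leq\gamma_0\}$ is independent of $W$.

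For $x<0$ I would invoke a pathwise symmetry rather than recompute. Replacing $Z_1\mapsto-Z_1$ sends $\gamma\mapsto-\gamma$, and replacing $\beta_2\mapsto-\beta_2$ sends $\gamma_0\mapsto-\gamma_0$; inspecting \eqref{G} shows $G_0(-Z_1,Z_2;-\beta_2)=-G_0(Z_1,Z_2;\beta_2)$, the sample-path counterpart of \eqref{symm}. Combining this with $-Z_1\overset{d}{=}Z_1$, $\E Z_j=0$ and the continuity of $F$ yields $J_{1,0}(x,\beta_2)=J_{1,0}(-x,-\beta_2)$ and $J_{0,1}(x,\beta_2)=-J_{0,1}(-x,-\beta_2)$, exactly as stated. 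Finally, to conclude that the rank equals $1$, I would note that for $x>0$ the integrand of \eqref{T1a2} is $(\phi\circ\Phi^{-1})$ evaluated at a point of $(0,1)$ and hence strictly positive, so $J_{0,1}(x,\beta_2)>0$; at $x=0$ one has $J_{1,0}(0,\beta_2)=-(\phi\circ\Phi^{-1})(\tfrac12(1-\beta_2))\neq0$ for $|\beta_2|<1$; and the $x<0$ case follows from the symmetry just derived. The main obstacle I anticipate is bookkeeping rather than depth: correctly pinning down the solution set $\{W\geq W_0(\gamma)\}$ — including the positivity of $\cos(\gamma-\alpha(\gamma-\gamma_0))$ that legitimizes the fractional power and the monotonicity in $W$ — and handling the improper boundary behavior of $\Phi^{-1}$ at $0$ and $1$, where $\phi\circ\Phi^{-1}$ must be shown to vanish.
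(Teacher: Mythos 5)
Your proposal is correct and follows essentially the same route as the paper's proof: the same splitting of the event $\{G_0\leq x\}$ according to $\{\gamma>\gamma_0\}$ versus $\{\gamma\leq\gamma_0\}$ (using that $G_0>0$ iff $\gamma>\gamma_0$), the same change of variables to $(\gamma,W)$ with the inner $W$-integration producing $e^{-x^{\alpha/(\alpha-1)}a(\gamma)}$ and $(\phi\circ\Phi^{-1})(1-e^{-x^{\alpha/(\alpha-1)}a(\gamma)})$, and the same sign-flip symmetry $G_0(-z_1,z_2,-\beta_2)=-G_0(z_1,z_2,\beta_2)$ combined with $\E Z_j=0$ to handle $x<0$. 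Your one genuine addition is the explicit non-vanishing check ($J_{0,1}(x,\beta_2)>0$ for $x>0$, and $J_{1,0}(0,\beta_2)\neq 0$ when $|\beta_2|<1$), which is actually needed to conclude the rank is exactly $1$ and which the paper's proof leaves implicit.
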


\begin{theorem}\label{th:2}
Let $\alpha=1$; the function $\ind\{G_1(Z_1,Z_2)\leq x\}$  has Hermite rank $m=m(x)=1$ $\forall x \in (-\infty,\infty)$ with coefficients:
\begin{itemize}
\item[a)] if $\beta_2=0$,
\begin{equation}\label{T2a1}
J_{1,0}(x, 0) =  - (\phi \circ \Phi^{-1})\left[\frac{1}{\pi}\left(\arctan \left( \frac{2}{\pi} x \right)+\frac{\pi}{2}\right)\right],
\end{equation}
\begin{equation}
J_{0,1}(x, 0) = 0;
\end{equation}
\item[b)] if $\beta_2>0$,
\begin{equation}\label{T2b1}
J_{1,0}(x, \beta_2) = \frac{1}{\pi} \int_{-\pi/2}^{\pi/2} \exp\{-e^{-x/\beta_2} a_1(\gamma)\} \, \Phi^{-1}\left(\frac{1}{\pi}(\gamma+ \frac{\pi}{2})\right) \, d \, \gamma ,
\end{equation}
\begin{equation}\label{T2b2}
J_{0,1}(x, \beta_2) = \frac{1}{\pi} \int_{-\pi/2}^{\pi/2} ( \phi \circ \Phi^{-1})(1-\exp\{-e^{-x/\beta_2} a_1(\gamma)\} )  \, d \, \gamma;
\end{equation}
\item[c)] if $\beta_2 <0$, formulae can be derived from the case $\beta_2>0$: for $J_{1,0}(x, \beta_2)$, using formula \eqref{T2b1}, compute $J_{1,0}(-x, -\beta_2)$ while for  $J_{0,1}(x, \beta_2)$, using formula \eqref{T2b2}, compute $-J_{0,1}(-x, -\beta_2)$. 
\end{itemize}
\end{theorem}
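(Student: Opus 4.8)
The plan is to compute the two first-order coefficients $J_{1,0}(x,\beta_2)$ and $J_{0,1}(x,\beta_2)$ explicitly, and then to observe that one of them never vanishes, which forces the Hermite rank to be exactly $1$. Throughout, only the independence and standard normality of $(Z_1,Z_2)$ enter. The device that makes everything tractable is to pass to the variables $\gamma=\gamma(z_1)$ and $W=W(z_2)$ of \eqref{gammaW}: writing $A=\pi/2+\beta_2\gamma$ and expanding the logarithm in \eqref{G1}, for \emph{fixed} $\gamma$ the transformation reads $G_1=C(\gamma)-\beta_2\log W$, with $C(\gamma)=A\tan\gamma-\beta_2\log\cos\gamma+\beta_2\log A$. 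Hence, for $\beta_2\neq 0$, $G_1$ is strictly monotone in $W$, and therefore in $Z^{(2)}$ (recall $W=-\log(1-\Phi(z_2))$ is increasing in $z_2$). Consequently the bivariate event $\{G_1\le x\}$ collapses, for each $z_1$, to a half-line in $z_2$, and the two-dimensional integrals \eqref{Jcoeff} defining $J_{1,0}$ and $J_{0,1}$ (using $H_1(u)=u$) factor into an inner integral over $z_2$ and an outer integral that I will rewrite as an integral in $\gamma$ via $\phi(z_1)\,dz_1=d\gamma/\pi$.

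For part b ($\beta_2>0$), monotonicity gives $\{G_1\le x\}=\{W\ge w^\ast\}$ with $w^\ast=\exp\{(C(\gamma)-x)/\beta_2\}$, i.e.\ $\{z_2\ge \Phi^{-1}(1-e^{-w^\ast})\}$. The inner integrals are then elementary: using $\int_a^\infty z\,\phi(z)\,dz=\phi(a)$ one gets the $Z^{(2)}$-integral $\phi\!\left(\Phi^{-1}(1-e^{-w^\ast})\right)$ for $J_{0,1}$, while the tail probability $1-\Phi\!\left(\Phi^{-1}(1-e^{-w^\ast})\right)=e^{-w^\ast}$ appears in $J_{1,0}$. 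After the change of variable to $\gamma$, the outer factor $z_1$ becomes $\Phi^{-1}\!\left(\tfrac1\pi(\gamma+\tfrac{\pi}{2})\right)$. It remains to identify $w^\ast$: a direct exponentiation gives $\exp\{C(\gamma)/\beta_2\}=\frac{\pi/2+\beta_2\gamma}{\cos\gamma}\exp\{\tfrac1{\beta_2}(\tfrac{\pi}{2}+\beta_2\gamma)\tan\gamma\}=a_1(\gamma)$ exactly as in \eqref{a1gamma}, so $w^\ast=e^{-x/\beta_2}a_1(\gamma)$. Substituting yields \eqref{T2b1} and \eqref{T2b2}.

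For part a ($\beta_2=0$) the term carrying $W$ disappears and $G_1=\tfrac{\pi}{2}\tan\gamma$ no longer depends on $z_2$; hence $\{G_1\le x\}$ is independent of $Z^{(2)}$ and $J_{0,1}(x,0)=\E[\ind\{\cdots\}]\,\E[Z^{(2)}]=0$. For $J_{1,0}$, the event reduces to $\{\gamma\le \arctan(\tfrac{2}{\pi}x)\}$, i.e.\ $\{z_1\le b\}$ with $b=\Phi^{-1}\!\left(\tfrac1\pi(\arctan(\tfrac2\pi x)+\tfrac{\pi}{2})\right)$, and $\int_{-\infty}^{b}z_1\phi(z_1)\,dz_1=-\phi(b)$ gives \eqref{T2a1}. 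Part c ($\beta_2<0$) I would obtain by symmetry rather than recomputation: since $\gamma(-z_1)=-\gamma(z_1)$, a short calculation shows the functional identity $G_1^{-\beta_2}(-z_1,z_2)=-G_1^{\beta_2}(z_1,z_2)$ (this is the functional-level counterpart of \eqref{symm}). Applying the measure-preserving change $Z_1\mapsto -Z_1$ inside \eqref{Jcoeff} then yields $J_{1,0}(x,\beta_2)=J_{1,0}(-x,-\beta_2)$ and $J_{0,1}(x,\beta_2)=-J_{0,1}(-x,-\beta_2)$, which is precisely the prescription in c.

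Finally, to conclude that the rank equals $1$ (and is not higher), note that for $\beta_2\neq 0$ one has $a_1(\gamma)>0$ on $(-\frac{\pi}{2},\frac{\pi}{2})$ (as $\cos\gamma>0$ and $\pi/2+\beta_2\gamma>0$ for $|\beta_2|\le 1$), so $w^\ast>0$, the argument $1-e^{-w^\ast}$ lies in $(0,1)$, and the integrand of $J_{0,1}$ is strictly positive; hence $J_{0,1}(x,\beta_2)\neq 0$ for every $x$. For $\beta_2=0$ the argument of $\Phi^{-1}$ in \eqref{T2a1} lies in $(0,1)$, so $J_{1,0}(x,0)=-(\phi\circ\Phi^{-1})(\cdot)\neq 0$. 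In either case the projection onto the first chaos is nonzero for all $x$, so $m(x)=1$. I expect the only genuinely delicate points to be bookkeeping ones: getting the \emph{direction} of monotonicity of $G_1$ in $W$ right according to the sign of $\beta_2$ (which fixes whether the half-line is $\{z_2\ge \cdot\}$ or $\{z_2\le\cdot\}$), and verifying the exponentiation collapsing $C(\gamma)/\beta_2$ into $a_1(\gamma)$; the symmetry identity $G_1^{-\beta_2}(-z_1,z_2)=-G_1^{\beta_2}(z_1,z_2)$ is the one structural fact that must be checked carefully.
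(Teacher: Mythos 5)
Your proof is correct and follows essentially the same route as the paper: rewrite $G_1$ as $\beta_2 \log[a_1(\gamma(z_1))/W(z_2)]$ (your $C(\gamma)-\beta_2\log W$ with $e^{C(\gamma)/\beta_2}=a_1(\gamma)$), integrate out $W \sim E(1)$ and pass to $\gamma$ via $\phi(z_1)\,dz_1 = d\gamma/\pi$, handling $\beta_2=0$ separately and recovering $\beta_2<0$ by the sign-flip symmetry $Z_1\mapsto -Z_1$, $\beta_2\mapsto-\beta_2$ (which is exactly the symmetry the paper invokes from the proof of Theorem \ref{th:1}). Your one addition --- explicitly checking that $J_{0,1}(x,\beta_2)>0$ for $\beta_2\neq 0$ and $J_{1,0}(x,0)\neq 0$, so that the rank is exactly $1$ --- is a detail the paper leaves implicit, not a different approach.
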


\begin{theorem}\label{th:3}
Let $1<\alpha<2$; the function $\ind\{G_0(Z_1,Z_2)\leq x\}$  has Hermite rank $m=m(x)=1$ $\forall x \in (-\infty,\infty)$ with coefficients:
\begin{itemize}
\item[a)] for $x \geq 0$,
\begin{equation}\label{T3a1}
J_{1,0}(x, \beta_2) = - \frac{1}{\pi} \int_{\gamma_0}^{\pi/2} e^{-x^{\frac{\alpha}{\alpha-1}} a(\gamma)} \Phi^{-1}\left(\frac{1}{\pi}(\gamma+ \frac{\pi}{2})\right) \, d \, \gamma ,
\end{equation}
\begin{equation}\label{T3a2}
J_{0,1}(x, \beta_2) = \frac{1}{\pi} \int_{\gamma_0}^{\pi/2} ( \phi \circ \Phi^{-1})(1-e^{-x^{\frac{\alpha}{\alpha-1}} a(\gamma)} )  \, d \, \gamma;
\end{equation}
\item[b)] for $x <0$, formulae can be derived from the case $x>0$: for $J_{1,0}(x, \beta_2)$, using formula \eqref{T3a1}, compute $J_{1,0}(-x, -\beta_2)$ while for  $J_{0,1}(x, \beta_2)$, using formula \eqref{T3a2}, compute $-J_{0,1}(-x, -\beta_2)$.
\end{itemize}
\end{theorem}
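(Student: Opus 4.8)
\emph{The plan} is to prove Theorem~\ref{th:3} along the route used for Theorem~\ref{th:1}, carefully adapting to the fact that the exponent $(1-\alpha)/\alpha$ is now \emph{negative}. First I would read off the two candidate rank-one coefficients from \eqref{Jcoeff} using $H_0\equiv 1$ and $H_1(u)=u$, namely $J_{1,0}(x)=\E[\ind\{G_0\leq x\}Z^{(1)}]$ and $J_{0,1}(x)=\E[\ind\{G_0\leq x\}Z^{(2)}]$. The central device is the change of variables $(Z^{(1)},Z^{(2)})\mapsto(\gamma,W)$ of \eqref{gammaW}: since $Z^{(1)},Z^{(2)}$ are independent, $\gamma\sim U(-\pi/2,\pi/2)$ and $W\sim E(1)$ are independent, and the inverse maps $Z^{(1)}=\Phi^{-1}((\gamma+\pi/2)/\pi)$, $Z^{(2)}=\Phi^{-1}(1-e^{-W})$ turn both expectations into iterated integrals against the weight $\tfrac{1}{\pi}\,e^{-w}\,dw\,d\gamma$ over $(-\pi/2,\pi/2)\times(0,\infty)$.

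Next, for fixed $\gamma$ I would solve the event $\{G_0\leq x\}$ for $W$. Writing $G_0=C(\gamma)\,W^{(\alpha-1)/\alpha}$, with $C(\gamma)$ collecting the $\gamma$-factors of \eqref{G}, the key point is that for $1<\alpha<2$ the exponent $(\alpha-1)/\alpha$ is positive, so $G_0$ is \emph{increasing} in $W$ on the region $\{C(\gamma)>0\}$ — the reverse of the $0<\alpha<1$ monotonicity. Solving $G_0=x$ yields the threshold $W^\ast=x^{\alpha/(\alpha-1)}a(\gamma)$, where a direct exponent computation identifies $C(\gamma)^{-\alpha/(\alpha-1)}$ with the $a(\gamma)$ of \eqref{agamma}; the event becomes $\{W\leq W^\ast\}$ on $\{C>0\}$ and all of $(0,\infty)$ on $\{C<0\}$ (there $G_0<0\leq x$). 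The region $\{C>0\}$ is exactly $(\gamma_0,\pi/2)$, governed by the sign of $\sin\alpha(\gamma-\gamma_0)$ once one checks the cosine factor stays positive so that $a(\gamma)$ is real.

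The inner $W$-integrals are then elementary. For $J_{1,0}$ they reduce to $\int_0^{W^\ast}e^{-w}\,dw=1-e^{-W^\ast}$; for $J_{0,1}$, after the substitution $t=1-e^{-w}$ and the antiderivative $\int\Phi^{-1}(t)\,dt=-(\phi\circ\Phi^{-1})(t)$, to the $(\phi\circ\Phi^{-1})(1-e^{-W^\ast})$ term of \eqref{T3a2}. To assemble $J_{1,0}$ I would use $\tfrac{1}{\pi}\int_{-\pi/2}^{\pi/2}\Phi^{-1}((\gamma+\pi/2)/\pi)\,d\gamma=\int_0^1\Phi^{-1}(u)\,du=0$ (the mean of $Z^{(1)}$): the $\{C<0\}$ contribution then cancels the constant part coming from $\{C>0\}$, leaving the single exponential integral of \eqref{T3a1}. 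Because here the inner integral produces $1-e^{-W^\ast}$ rather than the $e^{-W^\ast}$ of \eqref{T1a1}, the surviving term carries the overall minus sign and \emph{no} separate boundary term survives — precisely the structural difference from Theorem~\ref{th:1}. I would also note that the range $x\geq 0$ is handled uniformly, with no separate $x=0$ case, exactly because $\alpha/(\alpha-1)>0$ makes $x^{\alpha/(\alpha-1)}a(\gamma)\to 0$ continuously as $x\to 0^+$, unlike the $0<\alpha<1$ regime.

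Finally, for the rank claim I would observe that when $x>0$ the integrand of $J_{0,1}$ is $(\phi\circ\Phi^{-1})$ evaluated at a point of $(0,1)$, hence strictly positive over the nonempty interval $(\gamma_0,\pi/2)$, so $J_{0,1}(x)\neq 0$; at $x=0$ one has $J_{0,1}(0)=0$ but $J_{1,0}(0)=-(\phi\circ\Phi^{-1})((\gamma_0+\pi/2)/\pi)\neq 0$. Hence $(J_{1,0},J_{0,1})\neq(0,0)$ for every $x\geq 0$, which is the Hermite rank $m=1$ assertion; the case $x<0$ follows from the stable symmetry relation \eqref{symm} via $(x,\beta_2)\mapsto(-x,-\beta_2)$, giving part (c). The step I expect to be the main obstacle is the sign-and-limits bookkeeping of steps two and three: pinning down the $\gamma$-region where $C(\gamma)>0$ (and where the cosine factor keeps $a(\gamma)$ real), and tracking how the reversed monotonicity converts the Theorem~\ref{th:1} expressions into those of Theorem~\ref{th:3}, is where all the delicacy lies — the integrations themselves are routine.
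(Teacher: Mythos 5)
Your route is essentially the paper's: the same change of variables $(Z^{(1)},Z^{(2)})\mapsto(\gamma,W)$, the same threshold $W^\ast=x^{\alpha/(\alpha-1)}a(\gamma)$ obtained from the reversed monotonicity in $W$, and the same splitting at $\gamma_0$; the only cosmetic difference is that the paper first passes to the complement event $\{G_0>x\}$ via $\E[Z^{(1)}]=0$ (so that the region $\{\gamma\le\gamma_0\}$ drops out immediately), while you integrate over both regions and cancel with $\E[Z^{(1)}]=0$ at the end. Your $J_{1,0}$ computation and your rank-one argument are correct.

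There is, however, a genuine sign error in your $J_{0,1}$ step, and it sits exactly in the ``sign-and-limits bookkeeping'' you flagged as the delicate part. With the event $\{W\le W^\ast\}$ that you correctly derived, the inner integral is
\[
\int_0^{W^\ast}\Phi^{-1}(1-e^{-w})\,e^{-w}\,dw=\int_0^{1-e^{-W^\ast}}\Phi^{-1}(t)\,dt=-\,(\phi\circ\Phi^{-1})\left(1-e^{-W^\ast}\right),
\]
by your own antiderivative (the boundary term vanishes since $\phi(\Phi^{-1}(0^{+}))=0$); it is \emph{not} the positive quantity $(\phi\circ\Phi^{-1})(1-e^{-W^\ast})$ you assert. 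Executed correctly, your plan therefore yields
\[
J_{0,1}(x,\beta_2)=-\frac{1}{\pi}\int_{\gamma_0}^{\pi/2}(\phi\circ\Phi^{-1})\left(1-e^{-x^{\frac{\alpha}{\alpha-1}}a(\gamma)}\right)d\gamma,
\]
the negative of \eqref{T3a2}. The minus sign is forced: for $1<\alpha<2$ and $x>0$, $G_0$ is increasing in $W$, hence in $Z^{(2)}$, so $\ind\{G_0\le x\}$ is nonincreasing in $Z^{(2)}$ and $J_{0,1}(x,\beta_2)=\E[Z^{(2)}\ind\{G_0\le x\}]\le 0$; it cannot equal the strictly positive right-hand side of \eqref{T3a2} (contrast with $0<\alpha<1$, where $G_0$ is \emph{decreasing} in $W$ and \eqref{T1a2} is correctly positive). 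In fairness, the inconsistency originates in the paper itself: the final display of the paper's own proof of Theorem~\ref{th:3} carries exactly this leading minus sign, and its unexplained claim that the expression ``reduces to \eqref{T3a2}'' is where the paper's sign typo sits. A careful execution of your plan would have corrected the printed statement; as written, your proposal silently drops the minus sign and thereby ``proves'' a formula that is false for every $x>0$. The Hermite-rank conclusion $m=1$ survives either way, since it only requires $(J_{1,0},J_{0,1})\neq(0,0)$. Two smaller points: the $x<0$ case is part b) of this theorem (there is no part c)); and deriving it needs the pointwise symmetry $G_0(-z_1,z_2,-\beta_2)=-G_0(z_1,z_2,\beta_2)$ exploited in the proof of Theorem~\ref{th:1}, not merely the CDF identity \eqref{symm}.
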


\begin{rem}
The formulae presented in the theorems can be seen as a generalization of integral representations discussed in \citet{Zol86} and \citet{Weron96}. From the numerical point of view they are quite fast to calculate although some parameter values could easily induce overflow; in the supplemental material this issue will be discussed in more detail. 
\end{rem}

Before proving the theorems, recall the  definition of $J_{1,0}(x, \beta_2)$ and  $J_{0,1}(x, \beta_2)$ from \eqref{Jcoeff}. Also, if needed,  dependence of $G_0(z_1,z_2)$ on $\beta_2$ will be highlighted by writing $G_0(z_1,z_2,\beta_2)$.

\begin{proof}[Proof of Theorem \ref{th:1}]

Note that one can write 
\begin{equation}
\ind \{G_0(z_1,z_2) \leq x \} = \ind \{G_0(z_1,z_2) \leq x \}\left[ \ind \{\gamma(z_1)>\gamma_0 \}+\ind\{\gamma(z_1)\leq\gamma_0\}\right]
\end{equation}
and that  $G_0(z_1,z_2) >0$ if and only if $\gamma(z_1) > \gamma_0$. Consider first the case $x>0$; from the reasoning above it follows that, 
\begin{itemize}
\item[i)]  $\ind\{G_0(z_1,z_2) \leq x \}\ind \{\gamma(z_1)>\gamma_0 \}=\ind\{0 < G_0(z_1,z_2) \leq x \}\ind \{\gamma(z_1)>\gamma_0 \}$,  $x >0$;
\item[ii)] $\ind\{G_0(z_1,z_2) \leq x \}\ind \{\gamma(z_1)\leq \gamma_0 \}=\ind\{G_0(z_1,z_2) \leq 0 \}\ind \{\gamma(z_1)\leq \gamma_0 \}=\ind \{\gamma(z_1)\leq \gamma_0 \}$,  $x>0$.
\end{itemize}
To determine $J_{1,0}(x, \beta_2)$ in case $a)$, $x>0$, we then need to compute
\begin{equation}
\begin{split}
J_{1,0}(x, \beta_2)&= \int_{\Real^2} \ind\{0 < G_0(z_1,z_2) \leq x \}\ind \{\gamma(z_1)>\gamma_0 \} \, z_1 \phi(z_1)\phi(z_2) \, d z_1 \, dz_2 \\
                   & \qquad \qquad  \qquad \qquad + \int_{\Real^2} \ind\{\gamma(z_1)\leq\gamma_0\}  \, z_1 \phi(z_1)\phi(z_2) \, d z_1 \, dz_2 
\end{split}
\end{equation}
Denote the two integrals on the $r.h.s.$ of the above equation as $I_1 + I_2$. As far as $I_1$ is concerned, since $(1-\alpha)/\alpha >0$ we can write (see formula details in \eqref{G}),  $\ind\{0 < G_0(z_1,z_2) \leq x \}= \ind \{W(z_2) \geq x^{\alpha/(\alpha-1)} a(\gamma(z_1)) \}$; then after making the transformation $W=W(z_2)=- \log (1- \Phi(z_2))$ we have
\begin{equation}
\begin{split}
I_1 &= \int_{\Real^2} \ind \{w \geq x^{\alpha/(\alpha-1)} a(\gamma(z_1)) \}\ind \{\gamma(z_1)>\gamma_0 \} \, z_1 \phi(z_1) \, e^{-w} \, dw \, dz_1 \\
    &= \int_\Real e^{-x^{\alpha/(\alpha-1)} a(\gamma(z_1))}\ind \{\gamma(z_1)>\gamma_0 \} \, z_1 \phi(z_1)\, dz_1 \\
    &= \frac{1}{\pi} \int_{\gamma_0}^{\pi/2} e^{-x^{\alpha/(\alpha-1)} a(\gamma)} \phi^{-1}\left(\frac{1}{\pi}(\gamma+\pi/2)\right) \, d \gamma
\end{split}
\end{equation}
where the last step has been obtained by the transformation $\gamma=\gamma(z_1)=\pi \Phi(z_1)-\pi/2$.

As far as $I_2$ is concerned, it reduces to computing 
\begin{equation}
\int_\Real \ind \{\gamma(z_1)\leq \gamma_0 \} \, z_1 \phi(z_1)\, dz_1 = \int_\Real \ind \{z_1 \leq \gamma^{-1}(\gamma_0) \} \, z_1 \phi(z_1)\, dz_1 = \phi(\gamma^{-1}(\gamma_0))
\end{equation}
where $\gamma^{-1}(\gamma_0)= \Phi^{-1}\left(\frac{1}{\pi}(\gamma_0+\pi/2) \right)$. Putting together the results for $I_1$ and $I_2$ yields the coefficient $J_{1,0}(x, \beta_2)$ in case $a)$, $x>0$. 

$J_{1,0}(x, \beta_2)$ in case $b)$, $x=0$,  is simply obtained by $I_2$. 

To determine $J_{1,0}(x, \beta_2)$ in case $c)$, $x<0$, note that since, for expectation taken with respect to $Z_1$ and $Z_2$, 
$\E[Z_1]=0$, then, for any $x \in \Real$, 
\begin{equation}\label{EVa}
\begin{split}
\E \left[ Z_1 \ind\{G_0(z_1,z_2,\beta_2) \leq x \}\right] &= -\E \left[ Z_1 \ind\{G_0(z_1,z_2,\beta_2) > x \}\right] \\
                                                          &=-\E \left[ Z_1 \ind\{-G_0(z_1,z_2,\beta_2) < - x \}\right]. 
\end{split}
\end{equation}    
Given the definition of $G_0$ in \eqref{G}, we note that, for $x \in [-\pi/2, \pi/2]$, $-\sin(x)=\sin(-x)$, $\cos(x)=\cos(-x)$ and that $-\gamma(-Z_1)\sim U[-\pi/2,\pi/2]$. It follows that $\E[Z_1 \ind\{-G_0(z_1,z_2,\beta_2) <- x \}] = - \E[Z_1 \ind\{G_0(-z_1,z_2,-\beta_2) < -x]$ and hence, substituting into \eqref{EVa}, we have
$$\E[Z_1 \ind\{G_0(z_1,z_2,\beta_2) \leq x \}] =  \E[Z_1 \ind\{G_0(z_1,z_2,-\beta_2) \leq -x]$$ 
from which the statement for case $c)$, $x<0$, of the theorem.

\smallskip
Consider now computation of $J_{0,1}(x, \beta_2)$ in the case $x>0$, similarly to what done for $J_{1,0}(x, \beta_2)$ we need to compute
\begin{equation}
\begin{split}
J_{0,1}(x, \beta_2)&= \int_{\Real^2} \ind\{0 < G_0(z_1,z_2) \leq x \}\ind \{\gamma(z_1)>\gamma_0 \} \, z_2 \phi(z_1)\phi(z_2) \, d z_1 \, dz_2 \\
                   & \qquad \qquad  \qquad \qquad + \int_{\Real^2} \ind\{\gamma(z_1)\leq\gamma_0\} \, z_2 \phi(z_1)\phi(z_2) \, d z_1 \, dz_2 
\end{split}
\end{equation}
where we note this time that the second integral on the $r.h.s.$ of the above formula is null. We then compute simply
\begin{equation}
\begin{split}
J_{0,1}(x, \beta_2)&= \int_{\Real^2} \ind \{w \geq x^{\alpha/(\alpha-1)} a(\gamma(z_1)) \}\ind \{\gamma(z_1)>\gamma_0 \}  \Phi^{-1}(1-e^{-w}) e^{-w}  \phi(z_1) \, d w \, d z_1  \\
                   &= \int_{\Real} \ind\{\gamma(z_1)> \gamma_0\} ( \phi \circ \Phi^{-1})(1-e^{-x^{\frac{\alpha}{\alpha-1}} a(\gamma(z_1))} ) \phi(z_1) \, d z_1  \\
                   &= \frac{1}{\pi} \int_{\gamma_0}^{\pi/2}( \phi \circ \Phi^{-1})(1-e^{-x^{\frac{\alpha}{\alpha-1}} a(\gamma)} ) \, d  \gamma
\end{split}
\end{equation}
where, as before, the transformations $W=W(z_2)=- \log (1- \Phi(z_2))$ and $\gamma =\gamma(z_1)=\pi \Phi(z_1)-\pi/2 \,$ have been used in turn. From the results above and same reasoning as  for the case $J_{1,0}(0, \beta_2)$, we have $J_{0,1}(0, \beta_2)=0$. In the case $x<0$, a parallel reasoning to the corresponding case $J_{1,0}(x, \beta_2)$, yields that $\E[Z_2 \ind\{G_0(z_1,z_2,\beta_2) \leq x \}] = - \E[Z_2 \ind\{G_0(z_1,z_2,-\beta_2) \leq -x]$ from which the statement case $c)$, $x<0$ of the theorem.
\end{proof}

\begin{proof}[Proof of Theorem \ref{th:2}] Consider  the case $\beta_2=0$, in which $G_1(z_1,z_2)$ reduces to $\frac{\pi}{2} \tan [\gamma(z_1)]$  and, for $\gamma(z)$ defined in \eqref{gammaW}, 
$
J_{1,0}(x, \beta_2) = \E \left[ Z_1 \ind\{\frac{\pi}{2} \tan [\gamma(Z_1)] \leq x \} \right]
$
reduces to \eqref{T2a1} and $J_{0,1}(x, \beta_2) = \E \left[ Z_2 \ind\{\frac{\pi}{2} \tan [\gamma(Z_1)] \leq x \} \right]=0$.

In the case where $\beta_2 \neq 0$, $G_1(z_1,z_2)$ reduces to $\beta_2 \log[a_1(\gamma(z_1)/W(z_2)]$ with $a_1$ defined in \eqref{a1gamma}. Hence, for $\beta_2 >0$, $x \in \Real$, using the  transformations $W=W(z_2)=- \log (1- \Phi(z_2))$ and $\gamma =\gamma(z_1)=\pi \Phi(z_1)-\pi/2$,
\[
\begin{split}
J_{1,0}(x, \beta_2)&= \E \left[ Z_1 \ind\{W(Z_2) \geq e^{-x/\beta_2} a_1(\gamma(Z_1)) \} \right] \\
                   &= \int_\Real \exp\left\{ -e^{-x/\beta_2} a_1(\gamma(z_1))\right\} \, z_1  \phi(z_1) \, d  z_1 \\
                   \end{split}
\]
which reduces to \eqref{T2b1}, and
\[
\begin{split}
J_{0,1}(x, \beta_2)&= \E \left[ Z_2 \ind\{W(Z_2) \geq e^{-x/\beta_2} a_1(\gamma(Z_1)) \} \right] \\
                   &= \int_{\Real^2} \ind\{w \geq e^{-x/\beta_2} a_1(\gamma(z_1)) \} \Phi^{-1}(1-e^{-w})  e^{-w}  \, \phi(z_1) \, d  w \, d  z_1 \\
                   &= \int_\Real ( \phi \circ \Phi^{-1})(1-\exp\{e^{-x/\beta_2} a_1(\gamma(z_1))\} )\, \phi(z_1)\, d  z_1. 
 \end{split}
\]
As far as the case $\beta_2<0$, parallel reasoning exploiting symmetries, as done in the proof of Theorem \ref{th:1} brings to result $c)$ in Theorem \ref{th:2}.
\end{proof}

\begin{proof}[Proof of Theorem \ref{th:3}]
Following a similar scheme of proof as in Theorem \ref{th:1}, consider first the case $x>0$ and note that, since $\E[Z_1]=0$, where expectation is taken $wrt$ $Z_1$ and $Z_2$, 
\begin{equation}
\begin{split}
\E[Z_1 \ind \{G_0(Z_1,Z_2) \leq x \}] &= -\E [Z_1 \ind \{G_0(Z_1,Z_2)> x \}] \\
                                      &= -E\left[Z_1 \ind \{G_0(Z_1,Z_2) > x \}\left[ \ind \{\gamma(Z_1)>\gamma_0 \}+\ind\{\gamma(Z_1)\leq\gamma_0\}\right] \right] \\
                                      &= -E\left[Z_1 \ind \{G_0(Z_1,Z_2) > x \}\left[ \ind \{\gamma(Z_1)>\gamma_0 \}\right] \right], \quad x>0, 
\end{split}
\end{equation}
since $G_0$ cannot the greater than $x>0$ when $\gamma(z_1) \leq \gamma_0$. Since, for $1<\alpha<2$, $(\alpha-1)/\alpha >0$, for $x>0$ we can make the following computations:
\begin{equation}
\begin{split}
J_{1,0}(x,\beta_2) &=- \int_{\Real^2} \ind \{G_0(z_1,z_2) > x \} \ind \{\gamma(z_1)>\gamma_0 \} \, z_1 \, \phi(z_1) \phi(z_2) \, d  z_1 \, d  z_2 \\
                   &=- \int_{\Real^2} \ind \{W(z_2) > x^{\alpha/(\alpha-1)} a(\gamma(z_1)) \} \ind \{\gamma(z_1)>\gamma_0 \} \, z_1 \, \phi(z_1) \phi(z_2) \, d  z_1 \, d  z_2 \\
                   &=-\int_{\Real} e^{-x^{\alpha/(\alpha-1)} a(\gamma(z_1))} \ind \{\gamma(z_1)>\gamma_0 \} \, z_1 \, \phi(z_1)  \, d  z_1
\end{split}
\end{equation}
which reduces to \eqref{T3a1} after transforming $\gamma =\gamma(z_1)=\pi \Phi(z_1)-\pi/2$. The case for $x<0$ can be recovered by symmetry, following a parallel reasoning as the one in the proof of Theorem \ref{th:1}.

As far as the second coefficient, $J_{0,1}(x, \beta_2)$ is concerned, again, following the discussion above, for $x>0$ we can make the following computations:
\begin{equation}
\begin{split}
J_{0,1}(x,\beta_2) &=- \int_{\Real^2} \ind \{G_0(z_1,z_2) > x \} \ind \{\gamma(z_1)>\gamma_0 \} \, z_2 \, \phi(z_1) \phi(z_2) \, d  z_1 \, d  z_2 \\
                   &=- \int_{\Real^2} \ind \{W(z_2) > x^{\alpha/(\alpha-1)} a(\gamma(z_1)) \} \ind \{\gamma(z_1)>\gamma_0 \} \, z_2 \, \phi(z_1) \phi(z_2) \, d  z_1 \, d  z_2 \\
                   &=- \int_{\Real^2} \ind \{w > x^{\alpha/(\alpha-1)} a(\gamma(z_1)) \} \ind \{\gamma(z_1)>\gamma_0 \} \, \Phi^{-1}(1-e^{-w}) e^{-w} \, \phi(z_1)  \, d  w \, d  z_1 \\
                   &=- \int_{\Real} \ind \{\gamma(z_1)>\gamma_0 \} \,(\phi \circ \Phi^{-1})(1-\exp\{- x^{\alpha/(\alpha-1)} a(\gamma(z_1)) \}  \, \phi(z_1)  \, d  z_1 
\end{split}
\end{equation}
which reduces to \eqref{T3a2} after transforming $\gamma =\gamma(z_1)=\pi \Phi(z_1)-\pi/2$. The case for $x<0$ can be recovered by symmetry, following a parallel reasoning as the one in the proof of Theorem \ref{th:1}.
\end{proof}


\section{Application to goodness-of-fit testing}

As an application of the results of the last section, we consider the problem of testing the simple hypothesis $H_0: F=F_0$ for $F_0$ in the class of $\alpha$-stable distributions with $0<\alpha<2$ when the data show LRD as defined in the previous sections. The Kolmogorov-Smirnov statistic 
\begin{equation}
K_n= \sup_{x \in \Real }| F_n(x)-F(x)|
\end{equation}
will be discussed in some detail. For a stable rv $X$ defined as in Proposition \ref{prop:1} with $(Z^{(1) },Z^{(2) })$ satisfying Assumption \ref{A:1}, Proposition \ref{prop:2} implies that 
\begin{equation}
\sup_{x \in \Real }d_{n,1}^{-1}| (F_n(x)-F(x))-(J_{1,0}(x)Z_{1,0}(1)+ J_{0,1}(x)Z_{0,1}(1))|=o_P(1).
\end{equation}
Since $Z_{1,0}(1)$ and $Z_{0,1}(1)$ are two independent standard normal rv, one readily obtains that, under $H_0$, 
\begin{equation}\label{KST}
d_{n,1}^{-1} \frac{K_n}{c_0}\rightarrow_D |Z|, \qquad c_0 = \sup_{x \in \Real }\sqrt{(J_{1,0}(x))^2+ (J_{0,1}(x))^2}
\end{equation}
where $\rightarrow_D$ means convergence in distribution and $Z$ is a standard normal random variable.  It is worth emphasizing that such a simple and appealing result for the KS statistics based on $\alpha$-stable rv with LRD has never been derived in the literature. For analogous results for long memory moving averages see \citet{KouSur10} and the reference therein which however do not include the stable case.

Similar results will be obtained for any other test based on continuous functionals of the first order difference $d_{n,1}^{-1} (F_n(x)-F(x))$ such as the Cram\'er-von Mises test which will obtain an asymptotic distribution related to a $\chi^2$-distribution with one degree of freedom. 

These results are in sharp contrast with those of the i.i.d. setting. An noted by \citet{KouSur10} however, the test \eqref{KST} cannot distinguish $n^{1/2}$-neighborhoods of $F_0$; see \citet{KouSur10}, p. 3745, for furhter details which will not be repeated here.

In order to appreciate the precision of the asymptotic approximation a small Monte Carlo study  where the data generated satisfy the set up defined in Section \ref{Sec:Background} is performed. In order to implement the Monte Carlo experiment the following steps are taken (for further details see the supplemental material): 
\begin{itemize}
\item[i)] generate two random sequences $(Z_1,Z_2)$ satisfying Assumption \ref{A:1} with covariance function $r(k)= (1+k^2)^{-D/2}$. Note that we can write $r(k)= k^{-D}L(k)$ with $L(k) = k^D (1+k^2)^{-D/2}$.

\item[ii)] Apply transformations \eqref{G} (or \eqref{G1}) to the above sequences;

\item[iii)] Compute the empirical process and the KS  statistics.
\end{itemize}

\begin{table}
{\small
	\centering
	\begin{tabular}{cccclccclccc}
\toprule 
&&&  &       & \multicolumn{3}{c}{$d^{-1}_nK_n/c_0$} &  & \multicolumn{3}{c}{$K_n^{sd}$}  \\
\cmidrule{6-8}
\cmidrule{10-12}
\textbf{D}&\textbf{n}&\textbf{m} & \textbf{sd} & $\boldsymbol{\gamma \rightarrow}$      & \textbf{0.8} &  \textbf{0.9} &  \textbf{0.95} &    & \textbf{0.8} &  \textbf{0.9}  & \textbf{0.95} \\
\midrule
 0.2 & 128 &  1.1837&0.5484&    & 0.6234&0.7894&0.8960     && 0.7810&0.8944&0.9514     \\
     & 256& 1.1165&0.5583&     &0.6676&0.8198&0.9062     && 0.7928&0.8922&0.9494 \\
		&512&	1.0714&0.5513&	&0.6872&0.8380&0.9208		&& 0.7908&0.8974&0.9496 \\
     &1024& 1.0265&0.5520& &0.7214&0.8586&0.9276   && 0.8022&0.8964&0.9480  \\
     &2048& 1.0101&0.5515& &0.7290&0.8616&0.9266   && 0.804&0.8944&0.9458\\
\midrule
 0.5 & 128 &1.1019&0.5365& & 0.6856&0.8392&0.9194   &&0.7998&0.8944&0.9474\\
     & 256 &1.0525&0.5499& & 0.7146&0.8546&0.9234   &&0.8096&0.897&0.9482\\
    &512  &1.0385&0.5503&  & 0.7126&0.8562&0.9308   &&0.7972&0.9000&0.9520\\
     &1024 &0.9995&0.5532& & 0.7362&0.8710&0.9344   &&0.8084&0.9028&0.9498 \\
     &2048&0.9823&0.5498&  & 0.7374&0.8694&0.9404   &&0.7938&0.8950&0.9516 \\
\midrule
 0.8 &128& 0.9505&0.4444&  &0.7898&0.9208&0.9688    &&0.8030&0.9032&0.9510\\
     &256& 0.9540&0.4546&  &0.7936&0.9140&0.9650    &&0.8114&0.9004&0.9466 \\
     &512&0.9541&0.4798&   &0.7824&0.9036&0.9576    &&0.8042&0.8996&0.9486\\
     &1024&0.9537&0.4921&  &0.7712&0.9006&0.9564    &&0.8026&0.9008&0.9474 \\
     &2048&0.9604&0.4915&  &0.7684&0.8994&0.9564    &&0.8008&0.9016&0.9506 \\
\bottomrule
\end{tabular}
		\caption{\footnotesize Monte Carlo estimates (N=5000) of mean, standard deviation and the theoretical probability $\gamma=P(|Z| \leq z_{\gamma/2})$ for $d^{-1}_nK_n/c_0$ and $K_n^{sd}$ (see respectively \eqref{KST} and \eqref{ESKS}) for selected values of $n$ and $D$ based on the EP constructed from a Stable rv with $\alpha=0.5$; $\beta_2=0.5$. }
		\label{tab:1}
	}
\end{table}

\begin{table}
{\small
	\centering
	\begin{tabular}{cccclccclccc}
\toprule 
&&&  &       & \multicolumn{3}{c}{$d^{-1}_nK_n/c_0$} &  & \multicolumn{3}{c}{$K_n^{sd}$}  \\
\cmidrule{6-8}
\cmidrule{10-12}
\textbf{D}&\textbf{n}&\textbf{m} & \textbf{sd} & $\boldsymbol{\gamma \rightarrow}$      & \textbf{0.8} &  \textbf{0.9} &  \textbf{0.95} &    & \textbf{0.8} &  \textbf{0.9}  & \textbf{0.95} \\
\midrule
 0.2 & 128 & 1.1242&0.5613&   & 0.6508&0.8198&0.9074  && 0.7938&0.8964&0.9510     \\
     & 256 & 1.0714&0.5681&   & 0.6826&0.8308&0.9142  && 0.7890&0.8902&0.9478 \\  
     & 512 & 1.0267&0.5655&   & 0.7124&0.8478&0.9258  && 0.7912&0.8944&0.9486 \\
     & 1024& 0.9873&0.5679&   & 0.7254&0.8600&0.9310  && 0.7942&0.8934&0.9494 \\
     & 2048& 0.9494&0.5755&   & 0.7498&0.8718&0.9308  && 0.8014&0.8956&0.9454 \\
\midrule
 0.5 & 128 & 1.0596&0.5509&   & 0.7032&0.8516&0.9250  && 0.8040&0.9014&0.9452 \\
     & 256 & 1.0122&0.5490&   & 0.7202&0.8636&0.9344  && 0.7976&0.8988&0.9510 \\ 
     & 512 & 0.9911&0.5709&   & 0.7296&0.8628&0.9320  && 0.7996&0.9000&0.9494 \\
     & 1024& 0.9446&0.5633&   & 0.7546&0.8762&0.9372  && 0.8036&0.8964&0.9466 \\
     & 2048& 0.9164&0.5707&   & 0.7618&0.8814&0.9466  && 0.7944&0.9002&0.9534 \\
\midrule
 0.8 & 128 & 0.9196&0.4440&   & 0.8064&0.9278&0.9708  && 0.8028&0.9036&0.9488 \\
     & 256 & 0.9128&0.4656&   & 0.8066&0.9180&0.9652  && 0.8104&0.8998&0.9478 \\
     & 512 & 0.9154&0.4798&   & 0.7964&0.9114&0.9638  && 0.8032&0.8978&0.9506 \\
     &1024 & 0.9148&0.4936&   & 0.7912&0.9070&0.9606  && 0.8028&0.8978&0.9470 \\
     &2048 & 0.9192&0.5060&   & 0.7878&0.9064&0.9548  && 0.8056&0.9028&0.9464 \\
\bottomrule
\end{tabular}
		\caption{\footnotesize MonteCarlo estimates (N=5000) of mean, standard deviation and the theoretical probability $\gamma=P(|Z| \leq z_{\gamma/2})$ for $d^{-1}_nK_n/c_0$ and $K_n^{sd}$ (see respectively \eqref{KST} and \eqref{ESKS}) for selected values of $n$ and $D$ based on the EP constructed from a Stable rv with $\alpha=1$; $\beta_2=0$. }
		\label{tab:2}
	}
\end{table}

\begin{table}
{\small
	\centering
	\begin{tabular}{cccclccclccc}
\toprule 
&&&  &       & \multicolumn{3}{c}{$d^{-1}_nK_n/c_0$} &  & \multicolumn{3}{c}{$K_n^{sd}$}  \\
\cmidrule{6-8}
\cmidrule{10-12}
\textbf{D}&\textbf{n}&\textbf{m} & \textbf{sd} & $\boldsymbol{\gamma \rightarrow}$      & \textbf{0.8} &  \textbf{0.9} &  \textbf{0.95} &    & \textbf{0.8} &  \textbf{0.9}  & \textbf{0.95} \\
\midrule
 0.2 & 128 & 1.1646&0.5595&   & 0.6178&0.8006&0.9016  && 0.7866&0.8998&0.9562     \\
     & 256 & 1.1182&0.5767&   & 0.6478&0.8180&0.9094  && 0.7924&0.9036&0.9534 \\
     &512 &  1.0598&0.5579&   & 0.6868&0.8426&0.9268  && 0.7954&0.9036&0.9528 \\
     &1024 & 1.0232&0.5654&   & 0.7020&0.8524&0.9292  && 0.7856&0.8972&0.9562 \\
     &2048 & 1.0101&0.5706&   & 0.7090&0.8512&0.9300  && 0.7900&0.8954&0.9526 \\
\midrule
 0.5 & 128 & 1.0895&0.5392&   & 0.6938&0.8472&0.9258  && 0.8010&0.9004&0.9526 \\
     & 256 & 1.0631&0.5464&   & 0.7062&0.8468&0.9248  && 0.7978&0.9018&0.9494 \\
     & 512 & 1.0306&0.5588&   & 0.7148&0.8566&0.9270  && 0.8032&0.9022&0.9514 \\
     &1024 & 1.0122&0.5534&   & 0.7264&0.8656&0.9322  && 0.8050&0.9020&0.9502 \\
     &2048 & 0.9988&0.5556&   & 0.7294&0.8698&0.9368  && 0.8008&0.9036&0.9498 \\
\midrule    
 0.8 & 128 & 0.9448&0.4433&   & 0.7962&0.9210&0.9686  && 0.8052&0.9010&0.9486 \\
     & 256 & 0.9519&0.4598&   & 0.7872&0.9160&0.9652  && 0.8064&0.9060&0.9498 \\
     & 512 & 0.9635&0.4782&   & 0.7628&0.9022&0.9618  && 0.7954&0.9012&0.9524 \\
     &1024 & 0.9695&0.4891&   & 0.7674&0.8976&0.9558  && 0.8010&0.9008&0.9486 \\
     &2048 & 0.9738&0.5079&   & 0.7632&0.8856&0.9484  && 0.8058&0.8962&0.9472  \\ 
\bottomrule
\end{tabular}
		\caption{\footnotesize MonteCarlo estimates (N=5000) of mean, standard deviation and the theoretical probability $\gamma=P(|Z| \leq z_{\gamma/2})$ for $d^{-1}_nK_n/c_0$ and $K_n^{sd}$ (see respectively \eqref{KST} and \eqref{ESKS}) for selected values of $n$ and $D$ based on the EP constructed from a Stable rv with $\alpha=1.5$; $\beta_2=0.8$. }
		\label{tab:3}
	}
\end{table}

Tables \ref{tab:1} to \ref{tab:3} contain the summary of three experiments analyzing the asymptotic distribution of the KS statistic respectively for the case where $X \sim S_{0.5}(0.5,1,0)$, $X \sim S_{1}(0,1,0)$ and $X \sim S_{1.5}(0.8,1,0)$. Each case, defined by sample size ($n=128$, $256$, $512$, $1024$, $2048$) was replicated $N=5000$ times. If we define with $K_{n,i}$ $i=1, \dots, N$ the $i$-th KS statistic obtained by an EP constructed on $n$ generated stable rv and $K_{n,i}^*= d_{n,1}^{-1} \frac{K_{n,i}}{c_0}$, i.e. the theoretically-standardized version of the KS statistic, in the tables below the following quantities are reported: 

\begin{itemize}
\item[a)] the mean and the standard deviation, simply computed as 
\begin{equation}
m = \frac{1}{N} \sum_{i=1}^N K_{n,i}^*, \qquad sd= \sqrt{\frac{1}{N} \sum_{i=1}^{N-1} (K_{n,i}^*-m)^2};
\end{equation}
\item[b)] the empirical probability $P(d_{n,1}^{-1} \frac{K_n}{c_0} \leq z_{\gamma/2})$  where $z_\gamma$ is the $\gamma$ percentile of the standard normal distribution, i.e., if $Z \sim N(0,1)$, then $P(Z \leq z_\gamma)=\gamma$; 
\item[c)] the empirical probability  $P(K_n^{sd} \leq z_{\gamma/2})$ where $K_n^{sd}$ is the empirically standardized version of the KS statistic adjusted to the theoretical mean and variance of the rv $|Z|$, i.e.
\begin{equation}\label{ESKS}
K_{n,i}^{sd} = \frac{(K_{n,i}^*-m)}{sd} \frac{\sqrt{\pi-2}}{\sqrt{\pi}} + \sqrt{\frac{2}{\pi}}
\end{equation}
where $\E|Z|=\sqrt{\frac{2}{\pi}}=0.7979$ and  $\sqrt{\V |Z|} =\frac{\sqrt{\pi-2}}{\sqrt{\pi}}=0.6028$.
\end{itemize}
The values $\gamma=0.8$, $0.9$, $0.95$ corresponding to the percentiles $1.28$, $1.645$, $1.96$ were chosen in order to evaluate especially the final part of the distribution which is more important for testing. The computation of the empirical distribution of $K_n^{sd}$ allow to appreciate either the precision of the asymptotic normalizing constant $d_n$ and the quality of the normal approximation.

The results in Tables \ref{tab:1} to \ref{tab:3} are quite illuminating and show that asymptotic normality (in absolute value) holds quite well for different cases of stable rv, different values of the long memory parameter and even for relatively small sample sizes $n$. This can be clearly appreciated by inspecting closely the results for $K_n^{sd}$. Inspection of the results for $K_n^{*}$ show that the asymptotic normalizing constant $d_n$ may not always be otpimal, especially if $D$ is small. The results show clear convergence to the theoretical values as sample size $n$ increases. In the case $D=0.8$ the $5$\% significant level test is quite precise, eventually a bit conservative, in all cases and for small sample sizes.

In practice one actually needs a $\log n$- consistent estimate  of the normalizing constant; one can consult \citet{DalGir06} and the references therein for $\log n$- consistent estimators of the relevant quantities. 


\begin{thebibliography}{1}

\bibitem[Arcones (1994)]{Arc94} Arcones, M. A.. Limit theorems for nonlinear functionals of stationary
Gaussian sequences of vectors.  Ann. Probab.  2 (1994), no. 4, 2242--2274.

\bibitem[Chambers et al. (1976)]{CMS76}
Chambers, J. M.; Mallows, C. L.; Stuck, B. W.
A method for simulating stable random variables.
J. Amer. Statist. Assoc. 71 (1976), no. 354, 340-–344. 

\bibitem[Cs\"org\"o and Mielniczuk (1996)]{CsoMie96} Cs\"org\"o, S.; Mielniczuk, J.. The empirical process of a short-range dependent stationary sequence under Gaussian subordination. {\it Probab. Theory Related Fields} 104 (1996), no. 1, 15-–25. 

\bibitem[Dalla et al. (2006)]{DalGir06} Dalla, V.; Giraitis, L.; Hidalgo, J.. Consistent estimation of the memory parameter for nonlinear time series. J. Time Ser. Anal. 27 (2006), no. 2, 211–251. 


\bibitem[Dehling and Philipp (2002)]{DehPhi02} Dehling, H.; Philipp, W.. Empirical process techniques for dependent data.  {\it Empirical process techniques for dependent data}, 3–-113, Birkhauser Boston, Boston, MA, 2002. 

\bibitem[Dehling and Taqqu (1989)]{DehTaq89} Dehling, H. and Taqqu, M. S.. The empirical process of some long-range
dependent sequences with an application to $U$-statistics. {\it Ann. Statist.} 
 17 (1989), no. 4, 1767--1783.
 
\bibitem[Dehling et al. (2013)]{DehTaq13}  Dehling, H.; Rooch, A.; Taqqu, M. S.. Non-parametric change-point tests for long-range dependent data. {\it Scand. J. Stat.} 40 (2013), no. 1, 153-–173. 

\bibitem[Dobrushin and Major (1979)]{DobMaj79} Dobrushin, R. L. and Major, P.. Non-central limit theorems for non-linear
functionals of Gaussian fields. {\it Z. Wahrsch. verw. Gebiete}  50 (1979), 27--52.

\bibitem[Fox and Taqqu (1987)]{FoxTaq87} Fox, R. and Taqqu, M. S.. Multiple stochastic integrals with dependent
integrators, {\it J. Multivar. Anal.}  21 (1987), 105--127.

\bibitem[Ghosh (2013)]{Gho13} Ghosh, S.. Normality testing for a long-memory sequence using the empirical moment generating function. {\it J. Statist. Plann. Inference} 143 (2013), no. 5, 944-–954. 

\bibitem[Giraitis and Surgailis (1999)]{GirSur99}  Giraitis, L.; Surgailis, D.. Central limit theorem for the empirical process of a linear sequence with long memory. {\it J. Statist. Plann. Inference} 80 (1999), no. 1-2, 81-–93.

\bibitem[Ho and Hsing (1996)]{HoHsi96}  Ho, H.C.; Hsing, T.. On the asymptotic expansion of the empirical process of long-memory moving averages. {\it Ann. Statist.} 24 (1996), no. 3, 992-–1024. 
 
\bibitem[Jammalamadaka and Taufer (2006)]{JamTau06}  Jammalamadaka, S. Rao; Taufer, E.. Use of mean residual life in testing departures from exponentiality. {\it J. Nonparametr. Stat. } 18 (2006), no. 3, 277–-292.
 
\bibitem[Koul and Surgailis (2001)]{KouSur01}  Koul, H.L.; Surgailis, D.. Asymptotics of empirical processes of long memory moving averages with infinite variance. {\it Stochastic Process. Appl.} 91 (2001), no. 2, 309-–336. 
   
\bibitem[Koul and Surgailis (2002)]{KouSur02}  Koul, H.L.; Surgailis, D.. Asymptotic expansion of the empirical process of long memory moving averages. {\it Empirical process techniques for dependent data}, 213-–239, Birkhäuser Boston, Boston, MA, 2002.   
 
\bibitem[Koul and Surgailis (2010)]{KouSur10}  Koul, H.L.; Surgailis, D.. Goodness-of-fit under long memory. {\it J. Statist. Plann. Infer.}, 140, (2010) 3742--3753.  
 
\bibitem[Koul et al. (2013)]{KouSur13} Koul, H. L.; Mimoto, N.; Surgailis, D.. Goodness-of-fit tests for long memory moving average marginal density. {\it Metrika} 76 (2013), no. 2, 205–-224. 
 
\bibitem[L\'evy-Leduc et al. (2011)]{LedTaq11} L\'evy-Leduc, C.; Boistard, H.; Moulines, E.; Taqqu, M. S.; Reisen, V. A.. Asymptotic properties of U-processes under long-range dependence. {\it Ann. Statist.} 39 (2011), no. 3, 1399–-1426. 
 
\bibitem[L\'evy-Leduc and  Taqqu (2014)]{LedTaq14}   L\'evy-Leduc, C.; Taqqu, M. S. Hermite ranks and U-statistics. Metrika 77 (2014), no. 1, 105--136.   
\bibitem[Leonenko and Sakhno (2001)]{LeoSak01}  Leonenko, N. N.; Sakhno, L. M.. On the Kaplan-Meier estimator of long-range dependent sequences. Stat. Inference Stoch. Process. 4 (2001), no. 1, 17-–40.

\bibitem[Leonenko and Taufer (2001)]{LeoTau01} Leonenko, N. N.; Taufer, E.. Asymptotic properties of LSE in multivariate continuous regression with long memory stationary errors. {\it Metron} 59 (2001), no. 1-2, 55–72. 

\bibitem[Leonenko et al.(2002)]{LeoSakTau02} Leonenko, N. N.; Sakhno, L.; Taufer, E.. Product-limit estimator for long- and short-range dependent sequences under gamma type subordination. {\it Random Oper. Stochastic Equations} 10 (2002), no. 4, 301–320.
  
\bibitem[Leonenko and Taufer (2013)]{LeoTau13} Leonenko, N. N.; Taufer, E..  Disaggregation of spatial autoregressive
processes. {\it Spatial Statistics} 3, 1--20.
   
\bibitem[Taqqu (1975)]{Taqqu75} Taqqu, M. S.. Weak convergence to fractional browniam motion and to the Rosenblatt process,
{\it Z. Wahrsch. verw. Gebiete}  31 (1979), 287--302.   
   
\bibitem[Taqqu (1979)]{Taqqu79} Taqqu, M. S.. Convergence of integrated process of arbitrary Hermite rank,
{\it Z. Wahrsch. verw. Gebiete}  50 (1979), 53--83.

\bibitem[Taufer (2009)]{Tau09}  Taufer, E.. Wilcoxon-signed rank test for long memory sequences. {\it Comm. Statist. Theory Methods } 38 (2009), no. 16-17, 3240-–3248.
                                                      
\bibitem[Weron (1996)]{Weron96} Weron, R. On the Chambers-Mallows-Stuck method for simulating skewed stable random variables. Statist. Probab. Lett. 28 (1996), no. 2, 165–-171. 

\bibitem[Zolotarev (1986)]{Zol86} Zolotarev, V. M.. {\it One-dimensional stable distributions}. Translated from the Russian by H. H. McFaden. Translation edited by Ben Silver. Translations of Mathematical Monographs, 65. American Mathematical Society, Providence, RI, 1986.

\end{thebibliography}

\end{document}